\DeclareRobustCommand{\SkipTocEntry}[5]{}}{%
\DeclareRobustCommand{\SkipTocEntry}[4]{}}
\theoremstyle{plain}
\newtheorem{thm}{Theorem}[section]
\newaliascnt{prop}{thm}
\newtheorem{prop}[prop]{Proposition}
\newaliascnt{cor}{thm}
\newtheorem{cor}[cor]{Corollary}
\newaliascnt{lem}{thm}
\newtheorem{lem}[lem]{Lemma}
\newaliascnt{conj}{thm}
\theoremstyle{definition}
\newtheorem{defn}[thm]{Definition}
\newaliascnt{rem}{thm}
\newtheorem{rem}[rem]{Remark}
\DeclareMathOperator{\conv}{conv}
\DeclareMathOperator{\codeg}{codeg}
\DeclareMathOperator{\integ}{int}
\newcommand{\R}{{\mathds{R}}}
\newcommand{\Z}{{\mathds{Z}}}
\newcommand{\Pro}{{\mathds{P}}}
\newcommand{\C}{\mathds{C}}
\newcommand{\Fm}{{\mathcal{F}}}
\newcommand{\rleft}{\mathopen{}\mathclose\bgroup\left}
\newcommand{\rright}{\aftergroup\egroup\right}
\newcommand{\Zspan}[1]{\rleft<{#1}\rright>}
\DeclareMathOperator{\aff}{aff}
\newcommand{\set}[1]{\rleft\{ {#1} \rright\}}
\DeclareMathOperator{\codim}{codim}
\subjclass[2010]{Primary: 14M25, 52B20; Secondary: 52A39, 13P15}
\keywords{mixed discriminant, $A$-discriminant, defectivity, Cayley
polytopes, lattice polytopes.}
\begin{document}
\selectlanguage{english}


\title{On defectivity of families of full-dimensional point configurations}

\author{Christopher Borger}
\address{
Christopher Borger, Fakult\"at f\"ur Mathematik, 
Otto-von-Guericke-Universit\"at Magdeburg, 
Universit\"atsplatz 2, 
39106 Magdeburg, Germany
}

\email{christopher.borger@ovgu.de}

\author{Benjamin Nill}
\address{
Benjamin Nill, Fakult\"at f\"ur Mathematik, 
Otto-von-Guericke-Universit\"at Magdeburg, 
Universit\"atsplatz 2, 
39106 Magdeburg, Germany
}

\email{benjamin.nill@ovgu.de}

\begin{abstract}
	The mixed discriminant of a family of point configurations 
	can be considered as a generalization of the $A$-discriminant of one Laurent polynomial to a family of Laurent polynomials. Generalizing the concept of defectivity, a family of point configurations is called defective if the mixed discriminant is trivial. Using a recent criterion by Furukawa and Ito
	we give a necessary condition for defectivity of a family in the case that all point configurations are full-dimensional. This implies the conjecture by Cattani, Cueto, Dickenstein, Di Rocco and Sturmfels that a family of $n$ full-dimensional 
	configurations in $\Z^n$ is defective
	if and only if the mixed volume of the convex hulls of its elements
	 is $1$.
\end{abstract}

\maketitle

\section{Introduction}

Let us fix some notation. Throughout the paper, a \emph{configuration} $A \subset \Z^n$ denotes a finite subset of $\Z^n$. 
We write $A_0 + A_1 \coloneqq \set{a_0 + a_1: a_0 \in A_0, a_1 \in A_1}$
for the \emph{Minkowski sum} of two configurations $A_0,A_1 \subset \Z^n$.
We denote by $e_1,\dots,e_n$ the standard basis vectors in $\Z^n$ and in this context
also set $e_0 \coloneqq 0 \in \Z^n$. Furthermore we denote by
$\Delta_k \coloneqq \set{e_0,e_1,\dots,e_k}$ the vertices of the \emph{standard 
unimodular simplex}. The \emph{dimension} of $A \subset \Z^n$ is the dimension of
its affine hull (which we denote by $\aff(A)$) as an affine subspace
of $\R^n$ and is denoted by $\dim(A)$. 
We call $A$ \emph{full-dimensional} if $\dim(A) = n$.
We say that two configurations $A \subset \Z^n$, $B \subset \Z^m$ are 
\emph{isomorphic}
and denote this by $A \cong B$
if there is an affine lattice isomorphism of the ambient lattices 
$\aff(A) \cap \Z^n \to \aff(B) \cap \Z^m$ mapping $A$ onto $B$. A lattice polytope that is isomorphic to a standard unimodular simplex is called \emph{unimodular simplex}. 
If a lattice homomorphism $\varphi \colon \Z^n \to \Z^m$ is surjective, we call
$\varphi$ a \emph{lattice projection}.
For convenience we use the notation $[m] \coloneqq \set{0,\dots,m}$.
\newline
\newline
Let us recall the definition of the mixed discriminant (see \cite{MixedDiscriminants}).
Consider a configuration $A \subset \Z^n$.
We say that $f \in \C[x,x^{-1}] = \C[x_1,x_1^{-1}\dots,x_n,x_n^{-1}]$ is
\emph{supported on} $A$ if
it is of the form
\begin{align*}
	f = \sum_{a \in A} c_a x^a,
\end{align*} 
with $c_a \in \C$ for all $a \in A$. 
We call an isolated solution $u \in (\C^*)^n$
for a system of Laurent
polynomials $f_0(x)=\dots=f_k(x)=0$ a \emph{non-degenerate multiple root}, if the gradients $\nabla f_i(u)$ are linearly dependent, 
while any $k$ of them are linearly independent.
Now consider $A_0,\dots,A_k \subset \Z^n$. 
Each polynomial $f_i$ supported on $A_i$ is of the form $f_i = \sum_{a \in A_i} c_{i,a} x^a$ 
and we define the \emph{discriminantal variety} $\Sigma_{A_0,\dots,A_k}$
 as the closure of the set of coefficients $c_{i,a}$
such that the corresponding system of the Laurent polynomials $f_i$ has a
non-degenerate multiple root. If $\Sigma_{A_0,\dots,A_k}$ is a hypersurface, one
defines the \emph{mixed discriminant} $\Delta_{A_0,\dots,A_k}$ to be the up
to sign unique irreducible integral polynomial defining it.
Otherwise, and this is the case we are going to be interested in, we
set $\Delta_{A_0,\dots,A_k}=1$ and call the set of 
configurations $A_0,\dots,A_k$ \emph{defective}.
\newline

In the specific case of a single configuration $A \subset \Z^n$ the mixed discriminant $\Delta_A$ agrees with the \emph{$A$-discriminant} as introduced in \cite{GKZ}. Let us recall the relation of defectivity of a point configuration to defectivity of projective varieties. 
Let $A = \set{a_0,\dots,a_k} \subset \Z^n$ and denote by 
$X_A \subseteq \Pro^k$ the toric variety
obtained as the closure of the image of the morphism
\begin{align*}
	\varphi_A \colon (\C^*)^n \to \Pro^k \qquad t \mapsto 
	[t^{a_0} \colon \dots \colon t^{a_k}].
\end{align*}
Then the variety $X_A^*$ projectively dual to $X_A$ is the same as the
projectivization of the variety $\Sigma_A$.
The \emph{dual defect} $\delta_{X_A}$ of $X_A$ is defined as
$\delta_{X_A} \coloneqq \codim(X_A^*)-1$, and the variety $X_A$ is called \emph{defective} if $\delta_{X_A} > 0$. In particular, $X_A$ is 
defective if and only if $A$ is a defective configuration, or equivalently, the degree of the $A$-discriminant is zero. 
The $A$-discriminant, especially its degree, has been studied intensively starting with the book \cite{GKZ}. We refer to the survey article \cite{Piene-survey} for background and references. In particular, a special focus has been on the question of defectivity when $A$ is the set of all lattice points of its convex hull (\cite{DiRoccoSmooth}, \cite{DiRoccoCasagrande}, \cite{diRocco_dickenstein_piene}, \cite{dickenstein_nill}, \cite{dickenstein_nill_vergne}). In more general situations, conditions for defectivity were given in \cite{CC}, \cite{tropicalDiscriminants},\cite{EsterovNP}, \cite{ItoCayley}. In particular, a complete characterization in terms of so-called \emph{iterated circuits} was presented by Esterov \cite{EsterovNP} and proven in \cite{esterovAffineVarieties} (see also \cite{forsgard} for a more general version). Recently, a different characterization was obtained by Furukawa and Ito \cite{Ito} phrased in terms of so-called \emph{Cayley sums} (we refer the reader to Section~2 for the definition of Cayley sums).

The study of defectivity of a family of point configurations 
has so far been addressed in \cite{MixedDiscriminants},
 \cite{Dim2mixeddef}, \cite{Esterov2019}
and, using a slightly different definition of defectivity of a family, in \cite{EsterovNP}. By the so-called Cayley trick, their defectivity can be reduced to defectivity of their Cayley sum if all point configurations are full-dimensional (see Theorem~\ref{thm:reducetocayley}). Using the recent results by Furukawa and Ito, this allows us to deduce a necessary condition for defectivity of a family. For this, let us introduce some notation. For $A \subset \Z^n$ we denote by
$\rleft<A - A \rright>$ the subgroup of $\Z^n$ generated by the set
$\set{a_1 - a_2 \colon a_1,a_2 \in A}$ and say that $A \subset \Z^n$ is spanning, if $\rleft<A - A \rright> = \Z^n$. More generally we
say that a family $A_0,\dots,A_k \subset \Z^n$ is \emph{spanning}, if
$\rleft<A_0 - A_0 \rright> + \dots + \rleft<A_k - A_k \rright> = \Z^n$.

\begin{thm}
	\label{thm:mainthm}
	Let $k \leq n$ and $A_0,\dots,A_k \subset \Z^n$ full-dimensional configurations that form a spanning family. If $A_0,\dots,A_k$ is
	defective, then the convex hull of the Minkowski sum $A_0 + \dots + A_k$
	does not have any interior lattice points, i.e., 
	\begin{align*}
		\integ(\conv(A_0 + \dots + A_k)) \cap \Z^n = \emptyset.
	\end{align*}
\end{thm}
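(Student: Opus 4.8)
The plan is to reduce the statement about defectivity of the family $A_0, \dots, A_k$ to a statement about defectivity of a single configuration, namely their Cayley sum, and then invoke the Furukawa--Ito criterion. By the Cayley trick (Theorem~\ref{thm:reducetocayley}, which we may assume), the family $A_0,\dots,A_k$ is defective if and only if the Cayley sum $C(A_0,\dots,A_k) \subset \Z^{n+k}$ — that is, the configuration $\bigcup_{i=0}^k (\{e_i\} \times A_i)$ sitting in $\Z^k \times \Z^n$ — is a defective configuration in the single-configuration sense. So from now on I would work with $X := X_{C(A_0,\dots,A_k)}$ and its dual defect.

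Next I would unpack what the Furukawa--Ito characterization says for this Cayley configuration. Their criterion (Section~2) states, roughly, that a spanning full-dimensional configuration $B$ is defective exactly when $X_B$ admits a certain presentation as a Cayley sum of configurations in a \emph{lower}-dimensional lattice, with a numerical condition relating the number of summands to the dimension. The key structural point is that being defective forces the toric variety to be, after a suitable lattice projection $\varphi$, a Cayley sum $C(B_0,\dots,B_m)$ of $m+1$ configurations in $\Z^d$ with $m \geq d + \lceil \text{(codefect quantity)} \rceil$ or an analogous inequality — in any case the number of Cayley factors exceeds what the dimension would naively allow. I would then translate this back through the Cayley trick: the composite of the Cayley presentation of the family with the Furukawa--Ito projection yields a lattice projection $\psi \colon \Z^n \to \Z^{n'}$ under which all of $A_0, \dots, A_k$ simultaneously map to configurations lying along few lattice directions, with enough control to conclude that $\conv(A_0 + \dots + A_k)$ is, up to the projection, sandwiched in a lattice polytope with no interior points. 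The crucial classical input here is that the presence of an interior lattice point in $\conv(A_0 + \dots + A_k)$ would, via the mixed-volume inequalities and the fact that each $A_i$ is full-dimensional, force the Cayley polytope to be too "fat" in every coordinate direction to admit the Furukawa--Ito Cayley decomposition — i.e., an interior lattice point obstructs the required projection.

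Concretely the steps are: (1) invoke the Cayley trick to pass to the single configuration $C(A_0,\dots,A_k)$; (2) check that spanning-ness and full-dimensionality of the family translate into the hypotheses needed to apply the Furukawa--Ito criterion to the Cayley configuration; (3) extract from defectivity, via Furukawa--Ito, a lattice projection $\varphi$ exhibiting $C(A_0,\dots,A_k)$ as a Cayley sum over a lower-dimensional base with more factors than the base dimension permits for a non-defective situation; (4) descend $\varphi$ to the $\Z^n$ factor to get a projection $\pi \colon \Z^n \to \Z^n / L$ with $\dim(\pi(A_i)) $ small for all $i$, controlled by the numerics from step (3); (5) conclude that any lattice point in $\integ(\conv(A_0+\dots+A_k))$ would survive (or its image would) to force $\sum \dim(\pi(A_i))$ — hence by step (4) also $\sum \dim(A_i)$ in a way incompatible with $k \le n$ — to be too large, a contradiction. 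For step (5) the basic geometric fact is that an interior lattice point of a Minkowski sum of full-dimensional polytopes must in particular be interior in every coordinate direction, preventing the degeneracy that the Furukawa--Ito projection requires.

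The main obstacle I expect is step (4)–(5): faithfully transporting the lattice-projection datum produced abstractly by the Furukawa--Ito criterion (which is phrased for the Cayley configuration living in $\Z^k \times \Z^n$) back to a statement purely about the original configurations $A_0, \dots, A_k$ in $\Z^n$, and then matching the combinatorial inequality it gives (number of Cayley factors vs.\ base dimension) against the dimension count $k \leq n$ tightly enough to rule out an interior lattice point. In particular one has to be careful that the projection respects the Cayley structure — i.e., does not mix the $\Z^k$ "label" coordinates with the $\Z^n$ coordinates in a way that destroys the correspondence — and that the "no interior lattice point" conclusion is exactly the residue of the numerical slack left over; this bookkeeping, rather than any single hard inequality, is where the real work lies.
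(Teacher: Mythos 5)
Your high-level plan correctly identifies the two external inputs the paper also uses --- the Cayley trick (Theorem~\ref{thm:reducetocayley}) to pass to the single configuration $A_0 * \dots * A_k \subset \Z^{n+k}$, and the Furukawa--Ito criterion (Theorem~\ref{thm:ito}) to extract a lattice projection $\pi$ of codimension $c$ and a join-type Cayley decomposition of $\pi(A_0 * \dots * A_k)$ into $r+1$ non-empty faces with $c < r$. Steps (1)--(3) of your outline are sound. However, steps (4)--(5), which are where the actual contradiction must be produced, are not correct as described, and the two essential mechanisms of the proof are missing.

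First, the way the interior lattice point enters is not that it makes the Cayley polytope ``too fat to admit the Furukawa--Ito decomposition'': that decomposition exists unconditionally once defectivity is assumed. The correct mechanism is a \emph{codegree} bound. An interior lattice point of $\conv(A_0 + \dots + A_k)$ is equivalent (via the weighted-Minkowski-sum description of Cayley polytopes) to an interior lattice point of $(k+1)\cdot\conv(A_0 * \dots * A_k)$, hence $\codeg(\conv(A_0*\dots*A_k)) \leq k+1$. On the other hand, a Cayley decomposition into $r+1$ non-empty pieces projects the configuration onto $\Delta_r$, whose codegree is $r+1$, and codegree cannot increase under lattice projections; this yields $k \geq r$. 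This inequality is the \emph{only} place the interior lattice point is used, and your proposal does not contain it.

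Second, the contradiction requires a second, independent dimension inequality, and your route to it --- ``descend $\varphi$ to the $\Z^n$ factor to get a projection $\pi \colon \Z^n \to \Z^n/L$'' --- founders on exactly the difficulty you flag yourself: the Furukawa--Ito projection on $\Z^{n+k}$ has no reason to split along $\Z^k \times \Z^n$, so there is no induced projection of $\Z^n$ with the properties you want. The paper goes the other way: it \emph{lifts} the faces $F_i$ of the image back to faces $\hat F_i$ of $A_0 * \dots * A_k$ (gaining at most $c$ in dimension each) and then applies a dedicated combinatorial lemma (Lemma~\ref{lem:bdim}) about Cayley decompositions of Cayley sums of full-dimensional configurations: each piece has dimension at least $\min(k,n)$, and if all pieces have dimension $< n$ then their dimensions sum to at least $n - r + (r+1)k$. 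Comparing this lower bound with the upper bound $n + k + r(c-1)$ coming from the join-type condition forces $k \leq c < r$, contradicting $k \geq r$. Nothing in your proposal plays the role of this lemma, and without it (or a substitute) the ``bookkeeping'' you defer to is not mere bookkeeping --- it is the core of the argument. The proof of Lemma~\ref{lem:bdim} itself rests on the fact that every face of a Cayley sum is a Cayley sum of faces of the summands (Remark~\ref{rem:faces}), applied in both directions to the two Cayley decompositions $\tilde A_j$ and $B_i'$ of the same configuration; this double-counting of dimensions is the idea your outline is missing.
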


As a consequence, we get the following result which was conjectured in \cite{MixedDiscriminants}, where it was proven in the $2$-dimensional case as well as under additional smoothness assumptions. 

\begin{cor}
	Let $A_0,\dots,A_{n-1} \subset \Z^n$ be a spanning family of full-dimensional configurations. 
	Then $A_0,\dots,A_{n-1}$ is defective if and only if it has mixed volume $1$. In this case, $A_0, \ldots, A_{n-1}$ are all translates of the vertex set of the same unimodular simplex.
	\label{maincor}
\end{cor}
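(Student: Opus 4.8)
The plan is to derive the corollary from \autoref{thm:mainthm} together with the classical lower bound relating mixed volume to the existence of interior lattice points in a Minkowski sum. First I would invoke the elementary inequality
\begin{align*}
	\MV(\conv(A_0),\dots,\conv(A_{n-1})) \geq 1
\end{align*}
valid for any $n$ full-dimensional configurations in $\Z^n$, with equality in tightly constrained situations only. One direction of the corollary is easy: if the mixed volume equals $1$, then a standard result (going back to the characterization of the minimal-mixed-volume case, e.g.\ via the Cayley polytope or the Bernstein bound) forces $\conv(A_0),\dots,\conv(A_{n-1})$ to be, up to translation, copies of a common unimodular simplex, and such a family is classically known to be defective (its Cayley sum is a unimodular simplex, whose associated toric variety is a projective space, which is dual-defective). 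This also gives the last sentence of the statement.

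For the converse, suppose $A_0,\dots,A_{n-1}$ is a spanning family of full-dimensional configurations that is defective. Here $k = n-1 \le n$, so \autoref{thm:mainthm} applies and yields
\begin{align*}
	\integ(\conv(A_0 + \dots + A_{n-1})) \cap \Z^n = \emptyset.
\end{align*}
The key step is then to show that a full-dimensional lattice polytope of the form $P = P_0 + \dots + P_{n-1}$, with each $P_i$ a full-dimensional lattice polytope in $\R^n$, has no interior lattice point only if $\MV(P_0,\dots,P_{n-1}) = 1$. I would prove this via the translation-covering / successive-minimum argument: after translating, each $P_i$ contains a segment $[0, v_i]$ with $v_i$ a primitive lattice vector, and full-dimensionality lets one choose the $v_i$ to form a lattice basis after passing to a suitable sublattice; comparing $\MV$ of the $P_i$ with $\MV$ of these segments (which is a positive integer, the index of the sublattice) and using monotonicity of mixed volumes, one sees that $\MV(P_0,\dots,P_{n-1}) \ge n!\Vol(\Delta)$ for the simplex $\Delta = \conv(0,v_1,\dots,v_{n-1})$-type building block; if this mixed volume were $\ge 2$ one produces an interior lattice point in the sum by a counting or covering argument (for instance, $2\,\Delta_n$ already has an interior lattice point, and mixed volume $\ge 2$ forces $P$ to dominate a lattice-translate of $2\,\Delta_n$ up to unimodular equivalence). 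Hence $\MV(\conv(A_0),\dots,\conv(A_{n-1})) = 1$, and the rigidity statement above again identifies the $A_i$ as translates of a common unimodular simplex.

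The main obstacle I anticipate is the clean implication ``$P_0 + \dots + P_{n-1}$ has no interior lattice point $\Rightarrow$ $\MV = 1$'': one must rule out the possibility that the mixed volume is large while the summands are so ``thin'' in complementary directions that their sum still avoids interior points. The right tool is that mixed volume dominates the mixed volume of any sub-box one can inscribe after translating the summands to share a common vertex, combined with the fact that a full-dimensional lattice zonotope (sum of lattice segments) spanning $\Z^n$ of normalized volume $\ge 2$ contains an interior lattice point — this should follow from a pigeonhole argument on the fundamental parallelepiped, or by reduction to the normal form of the zonotope. Once this geometric lemma is in place, the corollary follows immediately by chaining it with \autoref{thm:mainthm} and the equality case of the mixed-volume-$\ge 1$ bound. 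I would also double-check that the spanning hypothesis is exactly what is needed to run the ``$v_i$ form a basis of a finite-index sublattice'' step, so that no degenerate low-rank configuration slips through.
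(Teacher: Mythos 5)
Your overall architecture matches the paper's: handle the easy direction, apply \autoref{thm:mainthm} to kill interior lattice points of $\conv(A_0+\dots+A_{n-1})$, then bridge from ``no interior lattice points'' to ``mixed volume $1$''. That bridge is exactly where your argument has a genuine gap, and you do not close it. The lemma you need --- that full-dimensional lattice polytopes $P_0,\dots,P_{n-1}$ with $\MV(P_0,\dots,P_{n-1})\geq 2$ force an interior lattice point in $P_0+\dots+P_{n-1}$ --- is true, but your sketch does not prove it: the ``building block'' $\conv(0,v_1,\dots,v_{n-1})$ has only $n-1$ nonzero vertices and is therefore degenerate in $\R^n$, so the proposed bound $\MV \ge n!\Vol(\Delta)$ is vacuous; and the assertion that $\MV\ge 2$ forces $P$ to dominate a lattice translate of $2\Delta_n$ up to unimodular equivalence is unjustified (monotonicity of mixed volumes gives numerical inequalities, not containments of dilated simplices). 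The pigeonhole/zonotope reduction is deferred precisely at the one point where the content lies. The paper closes this gap with a single citation: by Corollary~3.2 of \cite{MixedDegree},
\begin{align*}
\MV(\conv(A_0),\dots,\conv(A_{n-1})) = 1 + \sum_{\emptyset \neq I \subseteq [n-1]} (-1)^{n-|I|}\,\bigl|\integ(\conv(\textstyle\sum_{i\in I}A_i))\cap\Z^n\bigr|,
\end{align*}
and since each $\conv(\sum_{i\in I}A_i)$ is full-dimensional and sits, after translating by points of the omitted $A_j$, inside $\conv(A_0+\dots+A_{n-1})$, every summand vanishes once \autoref{thm:mainthm} applies. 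Note that the alternating signs in this identity mean one genuinely needs all the counts to vanish individually --- which full-dimensionality delivers --- rather than merely their signed sum; your approach never engages with this structure.

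A smaller error: in the easy direction you assert that the Cayley sum of $n$ translates of an $n$-dimensional unimodular simplex is a unimodular simplex. For $n\ge 2$ it is not; it is the vertex set of $\Delta_n\times\Delta_{n-1}$, whose toric variety is the Segre product $\Pro^n\times\Pro^{n-1}$, not a projective space. The defectivity of a mixed-volume-one family is nonetheless immediate (generically the system has a single simple root, so no non-degenerate multiple root can occur), and the final rigidity statement is, as in the paper, an external citation to Proposition~2.7 of \cite{MixedDiscriminants}.
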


\begin{proof}
Clearly, having mixed volume one implies defectivity.
By Theorem~1 in \cite{Khov78} (or Corollary 3.2 of \cite{MixedDegree}) 
the mixed volume of $\conv(A_0),\dots,\conv(A_{n-1})$
can be computed as
\begin{align*}
	1 + \sum_{\emptyset \neq I \subseteq [n-1]} (-1)^{n-|I|} 
	|\integ(\conv(\sum_{i \in I} A_i)) \cap \Z^n|.
\end{align*}
If $A_0,\dots,A_{n-1}$ is defective, \autoref{thm:mainthm} 
implies $\conv(A_0 + \dots + A_{n-1})$ and therefore
(as all $A_i$ are full-dimensional) also 
$\conv(\sum_{i \in I} A_i)$ for any $I \subseteq [n-1]$ to have no
interior lattice points. This shows that the mixed volume of 
$\conv(A_0),\dots,\conv(A_{n-1})$ is $1$.
The last statement follows from Proposition 2.7 of \cite{MixedDiscriminants} (see also \cite{EsterovGusev}).
\end{proof}

\begin{rem}
After the first version of this paper has been made available, there
has been given another proof of Corollary~\ref{maincor} by 
Esterov (Corollary 3.23 in \cite{Esterov2019}). Esterov's result
is more general in the sense that it only makes the weaker assumption
of $A_0,\dots,A_{n-1}$ forming a so-called irreducible family instead 
of all configurations being full-dimensional. However, it does not 
generalize Theorem~\ref{thm:mainthm}, as it only treats the case
of $k=n-1$. It would be interesting to investigate whether the 
assumption of full-dimensionality in Theorem~\ref{thm:mainthm}
can always be replaced by irreducibility of the family.
We call a family $A_0,\dots,A_k \subset \Z^n$ \emph{irreducible}
if no $l$ distinct members can be shifted to a common
$(l+(n-1-k))$-dimensional affine subspace for any 
$l \in \{1,\dots,k\}$.
\end{rem}

Note that for given $A_0,\dots,A_k \subset \Z^n$ one may always choose a spanning
family whose mixed discriminantal variety equals $\Sigma_{A_0,\dots,A_k}$. By applying a suitable transformation, this implies the following slightly more 
general version of \autoref{thm:mainthm}. 

\begin{cor}
	\label{cor:lambdacor}
	Let $k \leq n$ and $A_0,\dots,A_k \subset \Z^n$ full-dimensional configurations.
	Define $\Lambda \coloneqq \rleft < A_0 - A_0 \rright > + \dots +
	\rleft <A_k-A_k\rright >$ the lattice spanned by these
	configurations. If $A_0,\dots,A_k$ is defective then 
	\begin{align*}
		\integ((A_0-a_0) + \dots + (A_k-a_k)) \cap \Lambda
	 	= \emptyset,
	\end{align*}		
	for all choices $a_0,\dots,a_k$ such that $a_i \in A_i$
	for all $i \in [k]$.
\end{cor}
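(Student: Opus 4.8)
The plan is to reduce Theorem~\ref{thm:mainthm} to a statement about defectivity of a single configuration via the Cayley trick (\autoref{thm:reducetocayley}), and then to apply the Furukawa--Ito criterion to the Cayley sum. Concretely, given full-dimensional spanning $A_0,\dots,A_k \subset \Z^n$, I would form the Cayley sum $\Cm(A_0,\dots,A_k) \subset \Z^{n+k}$, which lives in a lattice of rank $n+k$ and whose convex hull is the Cayley polytope. Defectivity of the family is equivalent to defectivity of this single configuration. The point of passing to the Cayley sum is that the Furukawa--Ito characterization of defective configurations says, roughly, that a spanning configuration is defective precisely when (after a lattice projection) it is itself a Cayley sum over a lower-dimensional base in a controlled way; so defectivity of $\Cm(A_0,\dots,A_k)$ forces an \emph{iterated} Cayley structure. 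I would translate that iterated Cayley structure back downstairs into a statement forcing the Minkowski sum $A_0+\dots+A_k$ to be, in a suitable sense, ``thin'' — and ``thin'' in exactly the way that precludes interior lattice points.

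The key steps, in order: (1) Recall/set up the Cayley sum and state precisely the Cayley-trick equivalence (family defective $\iff$ Cayley configuration defective), citing \autoref{thm:reducetocayley}; record that since the $A_i$ are full-dimensional and the family is spanning, $\Cm(A_0,\dots,A_k)$ is a spanning full-dimensional configuration in rank $n+k$. (2) Apply the Furukawa--Ito criterion to $\Cm(A_0,\dots,A_k)$: defectivity yields a lattice projection under which the configuration becomes a Cayley sum $\Cm(B_0,\dots,B_m)$ of $m+1 \geq 2$ configurations over a base of lower dimension, with a dimension/count bookkeeping inequality. (3) Combine the ``outer'' Cayley structure (over $[k]$, coming from the trick) with this ``inner'' Cayley structure (from Furukawa--Ito) to conclude that $A_0+\dots+A_k$ admits a lattice projection $\pi$ onto $\Z$ (or onto a low-dimensional lattice) such that each fiber contributes only boundary points — i.e., the width of $\conv(A_0+\dots+A_k)$ in some primitive direction is constrained so that no lattice point can lie strictly between the two bounding hyperplanes while also being interior in the fiber directions. (4) Conclude $\integ(\conv(A_0+\dots+A_k)) \cap \Z^n = \emptyset$. (5) For \autoref{cor:lambdacor}: given arbitrary full-dimensional $A_0,\dots,A_k$, translate by the $a_i$ and restrict to the lattice $\Lambda$; the translated configurations $A_i - a_i$ are full-dimensional inside $\Lambda$ and span it, so they form a spanning family in $\Lambda \cong \Z^r$ with $r \leq n$, and $k \leq n$ still holds relative to... — here one must check $k \leq r$, which may fail, so instead one notes that the mixed discriminantal variety is unchanged under translation and under the inclusion $\Lambda \hookrightarrow \Z^n$, hence defectivity is preserved, and applies \autoref{thm:mainthm} verbatim in $\Lambda$; this yields exactly the displayed conclusion of \autoref{cor:lambdacor}.

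The main obstacle I anticipate is step (3): correctly matching up the combinatorics of the two nested Cayley structures. The Furukawa--Ito projection is a priori an arbitrary lattice projection of $\Z^{n+k}$, not one respecting the Cayley grading that produced $\Cm(A_0,\dots,A_k)$ in the first place, so I cannot simply ``read off'' a projection of $\Z^n$. The delicate part is to show that the existence of \emph{some} Cayley decomposition of $\Cm(A_0,\dots,A_k)$ can be post-composed or refined to one that is compatible with the $[k]$-grading — or, alternatively, to argue directly at the level of polytopes that a Cayley decomposition of the Cayley polytope $\conv(\Cm(A_0,\dots,A_k))$ restricts, on the slice corresponding to $A_0+\dots+A_k$, to a decomposition that bounds its ``lattice width'' in a way incompatible with having an interior lattice point. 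I would expect to spend most of the argument on a careful dimension count: the Cayley sum has dimension $n+k$, it is presented as an $(m+1)$-fold Cayley sum over a base of dimension $\leq n+k-m$, and combining $n+k = (\text{base dim}) + m$ with $k \leq n$ should squeeze the base dimension and force the projection of the original Minkowski sum to be degenerate enough; getting these inequalities to close is the crux, and is presumably where the hypothesis $k \leq n$ is actually used.
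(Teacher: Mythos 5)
Your step (5) is precisely the paper's (one-remark) argument for \autoref{cor:lambdacor}: translate each $A_i$ by $a_i$, note that the mixed discriminantal variety is unchanged by this and by regarding the supports as lying in $\Lambda$, so the translated configurations form a spanning, full-dimensional, still-defective family in $\Lambda$, and then apply \autoref{thm:mainthm} there. Your worry that $k\le r$ might fail is vacuous: since every $A_i$ is full-dimensional, each $\langle A_i-A_i\rangle$ already has rank $n$, so $\Lambda$ is a finite-index sublattice of $\Z^n$, $r=n$, and the hypothesis $k\le n$ of \autoref{thm:mainthm} transfers verbatim, giving exactly the displayed conclusion.
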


\begin{rem}
	The statement of \autoref{thm:mainthm} is in general not true if we 
	do not pose sufficient restrictions on the dimensions of the 
	configurations.
	A counterexample is provided by choosing
	$A_0,A_1 \subset \Z^2$ as 
	
	\begin{align*}
		A_0 = 
		\begin{pmatrix}
			0 & 1 & 2 \\
			0 & 0 & 0 
		\end{pmatrix} \text{ and }
		A_1 =
		\begin{pmatrix}
			0 & 0 & 0 \\
			0 & 1 & 2
		\end{pmatrix}.						
	\end{align*}
	It is straightforward to verify that the corresponding system 
	\begin{align*}
		f_0 = c_{0,00} + c_{0,10}x_1 + c_{0,20}x_1^2,\text{ } 
		f_1 = c_{1,00} + c_{1,01}x_2 + c_{1,02}x_2^2, 
	\end{align*}
	does not have a non-degenerate multiple root for any choice of 
	coefficients. Therefore the variety $\Sigma_{A_0,A_1}$ is empty,
	in particular $A_0,A_1$ is a defective family, while 
	$\conv(A_0 + A_1)$ contains $(1,1)$ as an interior lattice point.
\end{rem}

\begin{rem}
	Note that the criterion for defectivity given in \autoref{thm:mainthm} is not
	sufficient. An easy class of counterexamples is given for $k=0$ by 
	$A_0 \coloneqq \conv(n \Delta_n) \cap \Z^n$ for $n > 1$. Clearly 
	$\conv(A_0)$ does not
	have any interior lattice points but cannot be defective since its lattice width 
	is $n > 1$.	
\end{rem}

\subsection*{Organization of the paper} In Section~2 we introduce Cayley sums and recall some basic results. Section~3 contains the proof of \autoref{thm:mainthm}. 

\subsection*{Acknowledgments} We thank Alicia Dickenstein and Sandra Di Rocco for fruitful discussions and for sharing their notes
about Theorem~\ref{thm:reducetocayley}.
We furthermore thank Alexander Esterov for his interest and helpful comments.
This work was funded by the Deutsche Forschungsgemeinschaft (DFG, German Research Foundation) - 314838170, GRK 2297 MathCoRe. The second author is an affiliated researcher with Stockholm University and partially supported by the Vetenskapsr{\aa}det grant~NT:2014-3991. 

\section{Basics of Cayley Sums}

As Cayley sums are going to play a crucial role in our proof, let us
recall some basic facts.

\begin{defn}
Let $A_0, \dotsc , A_k \subset \Z^n$ be configurations. We define the \emph{Cayley sum} $A_0 * \dots * A_k$ as 
$$
A_0 * \dots * A_k \coloneqq (A_0 \times \{e_0\}) \cup (A_1 \times \{e_1\}) \cup \dots \cup (A_k \times \{e_k\}) \subset \Z^{n+k}. 
$$
We call a Cayley sum $A_0 * \dots * A_k$ \emph{proper} if all $A_i$ are
non-empty.
In this case one has $\dim(A_0 * \dots * A_k) = \dim(A_0 + \dots + A_k) + k$.
\end{defn}

Let $F \subseteq A$ be a subconfiguration of a configuration $A \subset \Z^n$.
We denote by $F^c = \set{x \in A \colon x \notin F}$ the \emph{complement of $F$ in $A$}. Furthermore, we
 call $F$ a \emph{face}
of $A$ if it is the intersection of a face of the lattice polytope $\conv(A)$ with $A$ and denote
by $\Fm(A)$ the set of all faces of $A$. We call a face $F \in \Fm(A)$
\emph{proper} if $F \neq A$.

\begin{defn}
	Let $A \subset \Z^n$ and $F_0,\dots,F_k \in \Fm(A)$ faces that cover $A$.
	We say that $F_0,\dots,F_k$ form a \emph{Cayley decomposition} of $A$
	if there exists a lattice projection $\pi \colon \Z^n \to \Z^k$ such that
	$\pi(F_i) \subseteq \set{e_i}$ for all $i \in [k]$. 
\end{defn}

\begin{rem}
\label{tilde_notation}
Clearly, a Cayley sum $A_0 * \dots * A_k$ has a Cayley 		decomposition into the faces $(A_0 \times \set{e_0}),\dots,(A_k \times \set{e_k})$ and we denote them
 by $\tilde{A_i} \coloneqq A_i \times \set{e_i}.$ 
\end{rem}

\begin{prop}
	\label{prop:basicCayleyEq}
	Let $A \subset \Z^n$ be a configuration. Then the following are equivalent.
	\begin{enumerate}
		\item \label{itm:n1} There exists a Cayley decomposition of $A$ into non-empty faces
		$F_0,\dots,F_k \in \Fm(A)$,
		\item \label{itm:n2} there exists a lattice projection $\pi \colon \Z^n \to \Z^k$
		with $\pi(A) = \Delta_k$,  
		\item \label{itm:n3} there exist configurations $A_0,\dots,A_k \subset \Z^{n-k}$ 
		such that $A \cong A_0 * \dots * A_k$.
	\end{enumerate}
\end{prop}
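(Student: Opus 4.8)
The plan is to prove the cyclic chain of implications $(\ref{itm:n1}) \Rightarrow (\ref{itm:n2}) \Rightarrow (\ref{itm:n3}) \Rightarrow (\ref{itm:n1})$. The two implications relating $(\ref{itm:n1})$ and $(\ref{itm:n2})$ are essentially tautological, the step $(\ref{itm:n3}) \Rightarrow (\ref{itm:n1})$ is immediate from \autoref{tilde_notation}, and all of the real work sits in $(\ref{itm:n2}) \Rightarrow (\ref{itm:n3})$.

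For $(\ref{itm:n1}) \Rightarrow (\ref{itm:n2})$ I would take the lattice projection $\pi \colon \Z^n \to \Z^k$ witnessing the Cayley decomposition: since the non-empty faces $F_i$ cover $A$ and satisfy $\pi(F_i) \subseteq \set{e_i}$, and since $F_i \neq \emptyset$ forces $\pi(F_i) = \set{e_i}$, one gets $\pi(A) = \bigcup_i \pi(F_i) = \set{e_0,\dots,e_k} = \Delta_k$. Conversely, for $(\ref{itm:n2}) \Rightarrow (\ref{itm:n1})$, given $\pi$ with $\pi(A) = \Delta_k$ I would put $F_i \coloneqq \set{a \in A \colon \pi(a) = e_i}$; these are non-empty and cover $A$, and each is a face of $A$ because $e_i$ is a vertex of the simplex $\pi(\conv(A)) = \conv(\Delta_k)$, so a linear functional uniquely minimized at $e_i$ pulls back along $\pi$ to a functional supporting $F_i$. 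The same $\pi$ is then the projection required by the definition of a Cayley decomposition.

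The heart of the matter is $(\ref{itm:n2}) \Rightarrow (\ref{itm:n3})$. Here I would note that $N \coloneqq \ker(\pi)$ is a sublattice of $\Z^n$ of rank $n-k$ which is saturated (as $\Z^n/N \cong \Z^k$ is torsion-free), hence a direct summand, so the short exact sequence $0 \to N \to \Z^n \xrightarrow{\pi} \Z^k \to 0$ splits. Fixing a section $s \colon \Z^k \to \Z^n$ of $\pi$ and an identification $\iota \colon \Z^{n-k} \xrightarrow{\sim} N$, the map $(x,y) \mapsto \iota(x) + s(y)$ is a lattice isomorphism $\Z^{n-k} \times \Z^k \to \Z^n$. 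Setting $v_i \coloneqq s(e_i)$ and $A_i \coloneqq \iota^{-1}(\set{a - v_i \colon a \in A,\ \pi(a) = e_i}) \subseteq \Z^{n-k}$ (the set is well defined since $a - v_i \in N$ whenever $\pi(a) = e_i$), one checks that this isomorphism maps $A_i \times \set{e_i}$ bijectively onto $\set{a \in A \colon \pi(a) = e_i}$; ranging over $i \in [k]$ and using $\pi(A) = \Delta_k$, it carries $A_0 * \dots * A_k$ onto $A$, which is exactly $A \cong A_0 * \dots * A_k$. (Since $\pi(A) = \Delta_k$ makes every fibre non-empty, the $A_i$ are non-empty and the Cayley sum is even proper.) The remaining implication $(\ref{itm:n3}) \Rightarrow (\ref{itm:n1})$ then follows because, by \autoref{tilde_notation}, $A_0 * \dots * A_k$ admits the Cayley decomposition $\tilde{A_0},\dots,\tilde{A_k}$ with projection the last $k$ coordinates, and any configuration isomorphism $A \cong A_0 * \dots * A_k$ transports this to a Cayley decomposition of $A$ into non-empty faces.

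I expect the only genuine obstacle to be the bookkeeping in $(\ref{itm:n2}) \Rightarrow (\ref{itm:n3})$: one has to make sure that the ambient isomorphism $\Z^{n-k} \times \Z^k \to \Z^n$ restricts correctly on the relevant (saturated) sublattices and affine hulls, so that $A \cong A_0 * \dots * A_k$ holds in the precise sense of the definition of $\cong$ from the introduction, and it is the saturatedness of $\ker(\pi)$ that makes the splitting available at all. Every other step is purely formal.
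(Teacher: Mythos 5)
Your argument is correct and is the standard one; the paper in fact gives no proof here (it is ``left to the reader''), so there is nothing to diverge from. The cycle $(1)\Rightarrow(2)\Rightarrow(3)\Rightarrow(1)$ is organized sensibly, the pullback-of-a-vertex-functional argument for $(2)\Rightarrow(1)$ is right, and the splitting of $0 \to \ker(\pi) \to \Z^n \to \Z^k \to 0$ via saturatedness of $\ker(\pi)$ is exactly the content of $(2)\Rightarrow(3)$.

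One caveat you should make explicit, because it is the only place your proof actually leans on the paper's conventions: a ``lattice projection'' is a surjective group homomorphism, hence \emph{linear}, whereas the isomorphism in $(3)$ is only \emph{affine}. Consequently, transporting the decomposition $\tilde{A_0},\dots,\tilde{A_k}$ along $A \cong A_0 * \dots * A_k$ in your step $(3)\Rightarrow(1)$ produces an affine surjection onto $\Z^k$ sending $F_i$ to $e_i$, not a priori a linear one, and in general no linear one exists: for $A = \set{(2,0),(3,0)} \subset \Z^2$ condition $(3)$ holds (with $A_0 = A_1 = \set{0} \subset \Z$), but any homomorphism $\Z^2 \to \Z$ sends $A$ to $\set{2a,3a}$, which is never $\set{0,1}$. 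So as literally stated the proposition (not just your proof) requires reading $(1)$ and $(2)$ up to translation of $A$, or allowing affine projections. The fix is one line — first translate $A$ by $-a$ for some $a \in F_0$, so that the transported projection becomes linear on $\Zspan{A-A}$'s saturation and extends to $\Z^n$ by the direct-summand argument you already use — but you should say it, since it is exactly the kind of bookkeeping you flag at the end without resolving.
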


The proof is left to the reader (c.f. \cite[Prop.2.3]{NillBatyrev}).

\begin{rem}
	\label{rem:faces}
	Let $A \subset \Z^n$ be a configuration, $F_0,\dots,F_k \in \Fm(A)$ 
	a Cayley decomposition of $A$ and $F \in \Fm(A)$ an arbitrary face. 
	Then we have a Cayley decomposition
	\begin{align*}
		F \cong (F_0 \cap F) * \dots * (F_k \cap F).
	\end{align*} 
	In particular, any face of a Cayley sum $A_0 * \dots * A_k$ is 
	isomorphic to a Cayley sum of (maybe empty) faces of each of the $A_i$.
\end{rem}

%
	

\begin{defn}
	Let $A_0,\dots,A_k \subset \Z^n$ be configurations. We say that the Cayley sum
	$A_0 * \dots * A_k$ is \emph{of join type} if the homomorphism
	$\rleft<A_0 - A_0 \rright> \oplus \dots \oplus \rleft< A_k - A_k \rright> 
	\to \Zspan{A_0 - A_0} + \dots + \Zspan{A_k - A_k} \subset \Z^n$ given by 
	$\rleft(a_0,\dots,a_k\rright) \mapsto a_0 + \dots + a_k$
	is injective.
\end{defn}

\begin{rem}
	\label{rem:dimofcayleyjoin}
	As $\dim(\Zspan{A_0 - A_0} \oplus \dots
	\oplus \Zspan{A_k - A_k}) = \dim(A_0) + \dots + \dim(A_k)$ and 
	$\dim(\Zspan{A_0 - A_0} + \dots 
	+ \Zspan{A_k - A_k}) = \dim(A_0 + \dots + A_k)$ a Cayley sum 
	$A_0 * \dots * A_k$ is of join type if and only if 
	$\dim(A_0) + \dots + \dim(A_k) = \dim(A_0 + \dots + A_k)$.
	In particular, the dimension of a proper Cayley sum $A_0 * \dots * A_k$ of 
	join type equals $\dim(A_0) + \dots + \dim(A_k) + k$ which is the
	maximal Cayley
	dimension for given dimensions of the summands $A_0,\dots,A_k$.
\end{rem}




\section{Proof of Main Theorem}

The following result has been presented by Di Rocco in a talk in June 2016 at the Fields Institute for Research in Mathematical Sciences and is soon to appear in an announced paper by Di Rocco, Dickenstein and
 Morrison
\cite{cayley_mixed} (see also \cite{MixedDiscriminants} for the 
special case where $k=n-1$).

\begin{thm}
\label{thm:reducetocayley}
If a family of configurations $A_0,\dots,A_k \subset \Z^n$ is defective,
then the Cayley sum 
$A_0 * \dots * A_k \subset \Z^{n+k}$ is defective.
\end{thm}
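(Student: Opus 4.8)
\textbf{Proof proposal for Theorem~\ref{thm:reducetocayley}.}

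The plan is to deduce this from the Cayley trick relating the discriminantal variety of a family to the discriminantal variety (i.e.\ $A$-discriminant) of a single configuration. First I would set up the dictionary: given full-dimensional $A_0,\dots,A_k \subset \Z^n$ and the Cayley sum $A \coloneqq A_0 * \dots * A_k \subset \Z^{n+k}$, a general Laurent polynomial $g$ with support $A$ can be written, using coordinates $(x,y) = (x_1,\dots,x_n,y_1,\dots,y_k)$ on $(\C^*)^{n+k}$, as $g(x,y) = f_0(x) + y_1 f_1(x) + \dots + y_k f_k(x)$, where each $f_i$ has support $A_i$ (here $y_0$ is interpreted as $1$, matching $e_0 = 0$). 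The key computation is to compare the locus $\Sigma_A$ of coefficients for which $g$ has a non-degenerate multiple root in $(\C^*)^{n+k}$ with the locus $\Sigma_{A_0,\dots,A_k}$ for which the system $f_0,\dots,f_k$ has a non-degenerate multiple root in $(\C^*)^n$.

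The crux is the following translation. At a point $(u,w) \in (\C^*)^{n+k}$, we have $\partial g/\partial y_i = f_i(u)$ and $\partial g/\partial x_j = \partial f_0/\partial x_j(u) + \sum_{i=1}^k w_i\, \partial f_i/\partial x_j(u)$. I would show that $g$ vanishing together with a rank-drop condition on $\nabla g$ at $(u,w)$ is equivalent to: all $f_i(u) = 0$ for $i \in [k]$, and the $n\times(k+1)$ matrix of gradients $(\nabla f_0(u) \mid \dots \mid \nabla f_k(u))$ has a kernel vector whose first coordinate (the $f_0$-coordinate) can be normalized — which, after identifying $w_i$ with $-($that kernel vector$)_i$, says precisely that the gradients $\nabla f_i(u)$ are linearly dependent. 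The non-degeneracy condition (any $k$ of the $k+1$ gradients of $g$ are independent) needs to be matched with the condition that any $k$ of the $n+1$ gradients $\nabla f_0(u),\dots,\nabla f_k(u)$ are independent; here full-dimensionality of the $A_i$ is what guarantees the system is \enquote{square enough} for the ambient dimension count $k \le n$, $\dim A = n+k$, to make the notions of non-degenerate multiple root line up on both sides. Taking closures, this yields a description of $\Sigma_A$ as (the closure of) a graph-like bundle over $\Sigma_{A_0,\dots,A_k}$ whose fibres over generic points have a fixed positive dimension, so that $\codim \Sigma_A$ in $(\C^*)^{A}$ equals $\codim \Sigma_{A_0,\dots,A_k}$ in $\prod_i (\C^*)^{A_i}$. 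In particular one is a hypersurface iff the other is, which is exactly the equivalence of defectivity.

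The main obstacle I expect is the careful bookkeeping of the non-degeneracy clause (\enquote{any $k$ gradients independent, all $k+1$ dependent}) under the Cayley correspondence — matching the $(k+1)$ gradients $\nabla_{x,y} g$ against the $(n+1)$ gradients $\nabla_x f_i$ is not a formal identity, and one must check that passing to Zariski closure does not create spurious extra components of $\Sigma_A$ coming from degenerate strata (e.g.\ points where some $f_i \equiv 0$ on a face, or where the root escapes to the boundary of the torus). This is precisely where the full-dimensionality hypothesis on each $A_i$ is used, and where I would lean on \cite{cayley_mixed} (respectively the less general \cite{MixedDiscriminants}) rather than reproving everything; so in practice the \enquote{proof} here is a pointer to that reference together with the above sketch of why the correspondence holds. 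An alternative, cleaner route worth mentioning is the projective/dual-variety formulation: $X_A$ for the Cayley polytope is a $\Pro^k$-bundle construction over the $X_{A_i}$, and defectivity of $A$ translates into a statement about the dual defect of this bundle, which has been computed in the literature; invoking that gives the same conclusion.
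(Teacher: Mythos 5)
Your proposal takes essentially the same route as the paper: the paper gives no proof of Theorem~\ref{thm:reducetocayley} at all, simply citing the (unpublished) reference \cite{cayley_mixed} (with \cite{MixedDiscriminants} for a less general version), and your sketch of the underlying Cayley-trick computation --- writing $g = f_0 + y_1 f_1 + \dots + y_k f_k$, matching $\nabla_y g = 0$ with $f_i(u)=0$ and $\nabla_x g = 0$ with a relation $\nabla f_0(u) + \sum_i w_i \nabla f_i(u) = 0$ having all coefficients in $\C^*$, and flagging the non-degeneracy bookkeeping and closure issues as the delicate points --- is the standard argument behind that reference. One small correction: since the coefficients of $g$ are exactly the coefficients of $f_0,\dots,f_k$, the two discriminantal loci live in the \emph{same} coefficient space and (as $w$ is generically determined by $u$) one expects them to coincide, rather than one being a positive-dimensional bundle over the other; this only strengthens your conclusion that the codimensions agree.
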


This identification allows us to apply the following
characterization of 
defective configurations by Furukawa and Ito \cite{Ito}
as the main tool in proving our statement about defectivity of a family
of configurations.

\begin{thm}[Furukawa, Ito]
	\label{thm:ito}
	Let $A \subset \Z^n$ be a spanning configuration.
	Then $A$ is defective if and only if there exist natural numbers $c < r$ and
	a lattice projection $\pi \colon \Z^n \to \Z^{n-c}$ such that 
	$\pi(A) \cong B_0 * \dots * B_r$ where this Cayley sum
	$B_0 * \dots * B_r$ is of join type and
	$B_i \neq \emptyset$ for all $i \in [r]$. 
\end{thm}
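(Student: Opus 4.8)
The plan is to prove the equivalence by passing from projective duality to the Gauss map of $X_A$ and then to a connectivity statement about the matroid of the homogenized configuration. First I would invoke the reflexivity theorem for projective duality (see \cite{GKZ}): for an irreducible $X\subseteq\Pro^k$ a general tangent hyperplane is tangent along a linear space whose dimension equals $\delta_X$, and this number coincides with the dimension of the general fibre of the Gauss map $\gamma_X\colon X\dashrightarrow\mathbb{G}(\dim X,k)$. Hence $A$ is defective, i.e. $\delta_{X_A}>0$, if and only if $\gamma_{X_A}$ has positive-dimensional general fibres. For $X_A$ this Gauss map is explicitly monomial: at a torus point $\varphi_A(t)$ the affine tangent space is the row span of $\big[\hat a_i\big]\cdot\mathrm{diag}(t^{a_i})$, where $\hat a_i=(1,a_i)\in\Z^{n+1}$ homogenizes $A=\{a_0,\dots,a_k\}$. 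Its maximal minors, the Plücker coordinates of $\gamma_{X_A}(t)$, are $\det\big[\hat a_i\big]_{i\in S}\cdot t^{\sigma_S}$, indexed by the affinely independent $(n+1)$-subsets $S$ (the full-dimensional simplices), with $\sigma_S:=\sum_{i\in S}a_i$. Thus $\gamma_{X_A}$ is the monomial map $t\mapsto[\,t^{\sigma_S}\,]_S$, whose general fibre is a subtorus coset of dimension $n-\dim\aff\{\sigma_S\}$. The conclusion of this step is that $A$ is defective if and only if the sums $\sigma_S$ over all full-dimensional simplices span an affine subspace of dimension strictly less than $n$.

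Next I would read this degeneracy through the matroid $M$ on $A$ whose bases are exactly the full-dimensional simplices (the matroid of the vectors $\hat a_i$). A nonzero affine functional $\phi$ on $\R^n$ with $\phi(\sigma_S)$ constant over all $S$ is the same datum as a weight $w_i:=\phi(a_i)$ all of whose basis sums $\sum_{i\in S}w_i$ agree. By basis exchange, passing from $S$ to $S-i+j$ changes this sum by $w_j-w_i$, and since within each connected component the exchange graph is connected, constancy of all basis sums is equivalent to $w$ being constant on every connected component of $M$. As $A$ is spanning, $\phi\neq0$ forces $w$ to be nonconstant, so such a $\phi$ exists precisely when $M$ has at least two components. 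Therefore $A$ is defective if and only if $M$ is disconnected. For the realizability needed in the converse I would use that, writing $\R^{n+1}=\bigoplus_i V_i$ for the component spans, the coordinate functional $x_0$ decomposes into functionals restricting to the block indicators on $\hat A$, so the separating $\phi$ is genuinely affine-linear.

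For the ``only if'' direction, assume $A$ defective, so $M$ splits into components $A_0,\dots,A_r$ with $r\geq1$. The block-indicator functionals assemble into a lattice projection $q\colon\Z^n\to\Z^r$ with $q(A)=\Delta_r$ and $q^{-1}(e_i)\cap A=A_i$, and \autoref{prop:basicCayleyEq} exhibits $A\cong B_0*\dots*B_r$ with all factors nonempty. Because the component spans are in direct sum, $\sum_i\dim(A_i)=\dim(A)-r$, which by \autoref{rem:dimofcayleyjoin} is exactly the join-type condition. Taking $\pi=\mathrm{id}$ (so $c=0<r$) produces the required certificate.

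For the ``if'' direction, suppose $\pi\colon\Z^n\to\Z^{n-c}$ gives $\pi(A)\cong B_0*\dots*B_r$ of join type with all $B_i\neq\emptyset$ and $c<r$. By the reasoning above a proper join-type Cayley sum into $r+1$ nonempty factors has homogenized matroid that decomposes as a direct sum of the $r+1$ (nonempty) blocks, so $M(\widehat{\pi(A)})$ has at least $r+1$ connected components. It remains to descend to $M(\hat A)$: the homogenization of $\pi$ is a surjection $\Z^{n+1}\to\Z^{n-c+1}$ with $c$-dimensional kernel, so $M(\widehat{\pi(A)})$ is a corank-$c$ projection (matroid quotient) of $M(\hat A)$. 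The crux is the lemma that projecting a realizable matroid along a $c$-dimensional subspace raises the number of connected components by at most $c$; granting it, $M(\hat A)$ has at least $(r+1)-c\geq2$ components and $A$ is defective. I expect this connectivity-under-projection estimate, equivalently the claim that a single elementary quotient changes the component count by at most one, to be the main obstacle: projection can both merge independent blocks (when a kernel direction links them) and split a block (as already occurs for the unit square), so controlling the net change requires tracking how circuits of $M(\hat A)$ survive or are created under the quotient. The other technical point is making the basis-sum/component equivalence of the second step fully rigorous, including the affine-linear realizability of the separating functional.
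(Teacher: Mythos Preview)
The paper does not prove \autoref{thm:ito}; it is quoted from Furukawa and Ito \cite{Ito} and used as a black box. So there is no proof in the paper to compare against. What matters is whether your argument is correct, and it is not.

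The gap is in your very first step. You assert that the dual defect $\delta_{X_A}$ equals the dimension of the general fibre of the Gauss map $\gamma_{X_A}$. Biduality does give that a general tangent hyperplane $H$ is tangent along a linear space of dimension $\delta_{X_A}$, but points $p$ in this contact locus share the tangent \emph{hyperplane} $H$, not the tangent \emph{space} $T_pX_A\subset H$; the Gauss map need not be constant along the contact locus. In fact, for any smooth non-linear $X$ the Gauss map is generically finite (Zak), while $\delta_X$ can be positive. Concretely, take $A=\Delta_1\times\Delta_2\subset\Z^3$, so $X_A=\Pro^1\times\Pro^2\subset\Pro^5$. Here $\delta_{X_A}=1$ (the dual variety is the locus of $2\times 3$ matrices of rank $\le 1$, which has codimension $2$), yet your monomial description of $\gamma_{X_A}$ gives $t\mapsto[\,t^{\sigma_S}\,]_S$ with the differences $\sigma_S-\sigma_{S_0}$ spanning all of $\Z^3$, so the Gauss map has $0$-dimensional fibres. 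Equivalently, the matroid of the homogenized configuration $\hat A$ is connected for this $A$, even though $A$ is defective.

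This breaks the equivalence ``$A$ defective $\Longleftrightarrow$ $M(\hat A)$ disconnected'' that your whole strategy rests on. Matroid disconnectedness corresponds exactly to the $c=0$ case of the Furukawa--Ito criterion (a join-type Cayley decomposition of $A$ itself), and the $\Pro^1\times\Pro^2$ example shows that $c>0$ is genuinely needed: there one must project $\Z^3\to\Z^2$ along the first coordinate to obtain $\pi(A)=\Delta_2$, i.e.\ $c=1<r=2$. Consequently your ``only if'' direction fails, and the connectivity-under-projection lemma you flag in the ``if'' direction cannot rescue the argument, because the target characterisation it feeds into is already false. A correct proof has to control the full conormal (incidence) variety rather than just the Gauss image; this is what the analysis in \cite{Ito} does.
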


It is a straightforward computation to show that $A_0,\dots,A_k \subset \Z^n$
form a spanning family if and only if their Cayley sum
$A_0 * \dots * A_k \subset \Z^{n+k}$ is spanning.

The following technical lemma is crucial for the proof of the main theorem. 

\begin{lem}
	\label{lem:bdim}
	Let $A_0,\dotsc,A_k \subset \Z^n$ be full-dimensional configurations
	and $B_0,\dots,B_r \subset \Z^{n+k-r}$ non-empty configurations, such
	that 
	\begin{align*}
		A_0 * \dots * A_k \cong B_0 * \dots * B_r \subset \Z^{n+k}.
	\end{align*}
	
	\begin{enumerate}[label=(\alph*)]
		\item 
		\label{itm:firstineq}
		One has $dim(B_i) \geq \min(k,n)$ for all $i \in [r]$.
		
		\item 
		\label{itm:mainineq}
		If furthermore $\dim(B_i) < n$ for all $i \in [r]$,
		 also the following inequality holds:
		\begin{align*}
			\dim(B_0) + \dots + \dim(B_r) \geq  n-r + (r+1)k .
		\end{align*}
	\end{enumerate}   
	
\end{lem}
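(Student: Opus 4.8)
The plan is to fix $C := A_0 * \dotsb * A_k$, which is full-dimensional in $\Z^{n+k}$ because the $A_i$ are, and to exploit that $C$ carries \emph{two} Cayley decompositions. By Proposition~\ref{prop:basicCayleyEq} we may fix lattice projections $\pi_A \colon \Z^{n+k}\to\Z^k$ and $\pi_B\colon \Z^{n+k}\to\Z^r$ with $\pi_A(C)=\Delta_k$, $\pi_B(C)=\Delta_r$, and $\tilde A_i = \pi_A^{-1}(e_i)\cap C$, $\tilde B_j = \pi_B^{-1}(e_j)\cap C$. Put $H_{i,j}:=\tilde A_i\cap\tilde B_j$, a face of $C$ (intersection of two faces), and set $s_i:=\#\{\,j\in[r]\colon H_{i,j}\neq\emptyset\,\}$ and $t_j:=\#\{\,i\in[k]\colon H_{i,j}\neq\emptyset\,\}$. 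By Remark~\ref{rem:faces}, $\tilde A_i$ is the Cayley sum of its nonempty pieces $H_{i,j}$, so the dimension formula for Cayley sums gives $\dim\!\big(\sum_{j}H_{i,j}\big)=n-s_i+1$; symmetrically $\dim(B_j)=\dim\!\big(\sum_i H_{i,j}\big)+t_j-1$, where these Minkowski sums are taken inside $\tilde A_i$ resp.\ $\tilde B_j$.

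The heart of the proof is a rigidity statement forced by full-dimensionality of \emph{all} $A_i$: the $s_i$ are all equal, say $s_0=\dotsb=s_k=:s$, and moreover $s\in\{1,\,r+1\}$. For the first claim, note $\dim\ker_\R\pi_A=n=\dim\tilde A_i$, so $\aff(\tilde A_i)=\pi_A^{-1}(e_i)$ and hence $\lspan(\tilde A_i-\tilde A_i)=\ker_\R\pi_A$; therefore the linear span of the differences of $\pi_B(\tilde A_i)$ equals $\pi_B(\ker_\R\pi_A)$, which is independent of $i$, so $s_i-1=\dim \pi_B(\tilde A_i)$ is independent of $i$. For the second claim, $\pi_B(\tilde A_i)$ is a subset of the vertex set of $\Delta_r$, so $\conv \pi_B(\tilde A_i)$ is a face of $\Delta_r$ of the fixed dimension $s-1$ and with the fixed direction space $\pi_B(\ker_\R\pi_A)$; since two faces of a simplex of dimension $\geq 1$ with the same direction space coincide, either $s=1$ or all the faces $\conv \pi_B(\tilde A_i)$ equal one face $\sigma\subseteq\Delta_r$, in which case $\bigcup_i\pi_B(\tilde A_i)=\pi_B(C)=\Delta_r$ forces $\sigma=\Delta_r$, i.e.\ $s=r+1$ and $H_{i,j}\neq\emptyset$ for all $i,j$.

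With this dichotomy both parts are short. If $s=1$, each $\tilde A_i$ lies in a single $\tilde B_{j(i)}$, so every $\tilde B_j$ is the Cayley sum of the full-dimensional configurations $\{A_i:j(i)=j\}$ (a nonempty family, as $\pi_B(C)=\Delta_r$), hence $\dim(B_j)\geq n\geq\min(k,n)$; this settles \ref{itm:firstineq} and also shows that the extra hypothesis of \ref{itm:mainineq} rules out the case $s=1$. If $s=r+1$, then $t_j=k+1$ for every $j$, so $\dim(B_j)=\dim\!\big(\sum_i H_{i,j}\big)+k\geq k\geq\min(k,n)$, giving \ref{itm:firstineq}. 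For \ref{itm:mainineq}, fix one index $i$; the nonempty pieces $H_{i,0},\dotsc,H_{i,r}$ have Minkowski sum of dimension $n-s_i+1=n-r$, so $\sum_{j}\dim H_{i,j}\geq n-r$, and combining with $\dim(B_j)\geq \dim H_{i,j}+k$ for each $j$ yields $\sum_j \dim(B_j)\geq \sum_j \dim H_{i,j}+(r+1)k\geq n-r+(r+1)k$.

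I expect the rigidity step (the second paragraph) to be the main obstacle, both in getting the linear-algebra bookkeeping right — making ``the direction space of $\pi_B(\tilde A_i)$'' precise, and checking the elementary fact that faces of a simplex are determined by their direction space once they are positive-dimensional — and because this is exactly the point where one uses that \emph{every} $A_i$ is full-dimensional rather than just some of them; the first remark after the statement shows the conclusion genuinely breaks down otherwise. Everything else is a routine bookkeeping with the Cayley-sum dimension formula.
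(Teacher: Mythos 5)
Your proof is correct, and its skeleton coincides with the paper's: both exploit the two Cayley decompositions of $A_0 * \dots * A_k$, reduce everything to the intersections $\tilde{A}_i \cap \tilde{B}_j$, and close with the same dimension count via \autoref{rem:faces} (your inequalities $\dim(B_j)\geq \dim(\tilde{A}_i\cap\tilde{B}_j)+k$ and $\sum_j \dim(\tilde{A}_i\cap\tilde{B}_j)\geq n-r$ are exactly the two estimates the paper combines at the end). Where you genuinely diverge is in how you establish that all these intersections are nonempty. The paper argues locally on each $\tilde{B}_j$ with $\dim(B_j)<n$: its complement in $A_0*\dots*A_k$ is again a proper face, and if $\tilde{B}_j$ missed some $\tilde{A}_i$ entirely, the complement would contain the $n$-dimensional $\tilde{A}_i$ while also meeting every other $\tilde{A}_{i'}$, forcing its dimension to be at least $n+k$ --- impossible for a proper face. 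You instead prove a global rigidity statement: full-dimensionality of each $A_i$ makes $\lspan(\tilde{A}_i-\tilde{A}_i)$ equal to the kernel of the first Cayley projection, independently of $i$, so the images $\pi_B(\tilde{A}_i)$ are faces of $\Delta_r$ with one common direction space, and since a positive-dimensional face of a simplex is determined by its direction space you get the dichotomy $s\in\{1,r+1\}$. Your route is somewhat longer but buys a cleaner structural statement (each $\tilde{A}_i$ either lies in a single $\tilde{B}_j$ or meets all of them), which makes part \ref{itm:firstineq} transparent in both branches and isolates precisely where full-dimensionality of \emph{every} $A_i$ is used; the paper's complement trick is shorter but more ad hoc. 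Both arguments are complete.
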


\begin{proof}
For $k = 0$ or $r = 0$ one can directly verify that both statements hold.
So we may assume $k,r \geq 1$ and observe that in this case each of the 
$\tilde{B}_i \subseteq B_0 * \dots * B_r$ (see \autoref{tilde_notation})
  is isomorphic to a proper face
 $B_i' \subseteq A_0 * \dots * A_k$ 
and $B_0', \dots , B_r'$ form a Cayley decomposition of $A_0 * \dots * A_k$ 
(since the $\tilde{B}_i$ form a Cayley decomposition of $B_0 * \dots * B_r$).
The complement $(B_i')^c$ of each of the $B_i'$ is again a proper face of 
$A_0 * \dots  * A_k$ (since this is true for the complement of $\tilde{B}_i$).
Let now $i \in [r]$ be arbitrary and assume $\dim(B_i)<n$ (otherwise
\ref{itm:firstineq} is trivial).
Then $B_i'$  cannot contain $\tilde{A}_j$ for any $j \in [k]$ and 
 $(B_i')^c$ has non-empty intersection with each of the $\tilde{A}_j$.
 Therefore
	by \autoref{rem:faces}
in particular $\dim (B_i')^c \geq \dim ((B_i')^c \cap \tilde{A}_j) + k$ for all $j \in [k]$. 
If now
$(B_i')^c$ contained one of the $\tilde{A}_j$, this inequality would 
imply $\dim(B_i')^c \geq n+k$ in contradiction to $(B_i')^c$ being a 
proper face of $A_0 * \dots * A_k$.
So also $B_i'$ has non-empty intersection with all of 
the  $\tilde{A}_j$ and 
by \autoref{rem:faces} we have
\begin{align*}
B_i' \cong (\tilde{A}_0 \cap B_i') * \dots * (\tilde{A}_k \cap B_i'),
\end{align*}
which implies
\begin{align}
	\label{ineq:dim}
	\dim(\tilde{A}_j \cap B_i') \leq \dim(B_i') - k,
\end{align}
for all $j \in [k]$ and all $i \in [r]$ with $\dim(B_i) < n$. This in 
particular implies
$\dim(B_i)=\dim(B_i') \geq k \geq \min(k,n)$. 
Moreover, since the $B_i'$ also form a Cayley decomposition 
of $A_0 * \dots * A_k$, we obtain
$$
\tilde{A}_j \cong (\tilde{A}_j \cap B_0') * \dots * (\tilde{A}_j \cap B_r')
$$
and therefore assuming $\dim(B_i) < n$ for all $i \in [r]$ applying 
\eqref{ineq:dim} yields
\begin{align*}
	n = \dim(\tilde{A}_j) &\leq r + \dim(\tilde{A}_j \cap B_0') + 
	\dots + \dim(\tilde{A}_j \cap B_r') \\
	&\leq r + \dim(B_0') - k + \dots + \dim(B_r') - k.
\end{align*}






\end{proof}

Note that the result above remains true in the more general setting
of point configurations in $\R^n$ and the notion of 
isomorphy induced by affine bijections. 

Let us recall that the \emph{codegree} $\codeg(P)$ of a lattice polytope 
$P \subset \R^n$ is the smallest natural number $c \geq 1$ such
that $\integ(cP)\cap\Z^n \neq \emptyset$ (see e.g. \cite{dickenstein_nill}). 


\begin{proof}[Proof of \autoref{thm:mainthm}]
	As remarked above, \autoref{thm:reducetocayley} implies that
	$A_0 * \dots * A_k \subset \Z^{n+k}$ is a spanning defective configuration. 
	By \autoref{thm:ito} 
	 there exist $c < r$ and a lattice projection $\pi \colon
	 \Z^{n+k} \to \Z^{n+k-c}$ such that $\pi(A_0 * \dots * A_k)$ has a 
	 Cayley decomposition of join type into non-empty faces
	$F_0,\dots,F_r \in \Fm(\pi(A_0 * \dots * A_k))$ .
	Let us assume that $\conv(A_0 + \dots + A_k)$ has interior lattice
	 points. 
	By the well-known connection between Cayley sums and weighted Minkowski sums (see e.g. \cite{Cayley})
	this is equivalent to $(k+1) \cdot \conv(A_0 * \dots * A_k)$ having an 
	interior point in $\Z^{n+k}$,
	which implies $\codeg(\conv(A_0 * \dots * A_k)) \leq k+1$.
	By \autoref{prop:basicCayleyEq} we have a projection 
	$\pi_r \colon \Z^{n+k-c} \to \Z^r$
	that maps $\pi(A_0 * \dots * A_k)$ surjectively onto $\Delta_r$. Since under
	lattice projections the codegree of a lattice polytope cannot increase 
	we get inequalities
	
	\begin{align*}
		k+1 \geq \codeg(A_0 * \dots * A_k) \geq \codeg(F_0 * \dots * F_r) \geq
		\codeg(\Delta_r) = r + 1, \text{ hence}
	\end{align*} 

	\begin{align}
		\label{ineq:k>=r}
		k \geq r.
	\end{align}
	 
	We observe that the lifts
	\begin{align*}
		\hat{F}_i \coloneqq \pi^{-1}(F_i) \cap (A_0 * \dots * A_k)
	\end{align*}
	define a Cayley decomposition (in general not of join type) of 
	$A_0 * \dots * A_k$.
	As $\pi$ is a projection of codimension $c$, we see 
	
	\begin{align}
		\label{ineq:liftdim}
		\dim(\hat{F}_i) \leq \dim(F_i) + c,	
	\end{align}
	
	for all $i \in [r]$. Combining this with 
	the fact
	that the $F_i$ form a Cayley decomposition of join type and using
	\autoref{rem:dimofcayleyjoin} one obtains
	
	\begin{align*}
		\dim(\hat{F}_0) + \dots + \dim(\hat{F}_r) 
		&\leq \dim(F_0) + \dots + \dim(F_r) + c(r+1)\\
		&= \dim(F_0 + \dots + F_r) + c(r+1)\\
		&= n + k - c - r + c(r+1)\\
		&= n + k + r(c-1).
	\end{align*}
	Let us assume $\dim(\hat{F}_j) \geq n$ for some $j \in [r]$ .
	Therefore $\dim(F_j) \geq n-c$. 
	Without loss of generality let $j=0$.
	As the $F_i$ form a Cayley decomposition of join type of the 
	$(n+k-c)$-dimensional configuration $\pi(A_0 * \dots * A_k)$ we have
	the following inequality for the remaining summands:
	\begin{align*}
		\dim(F_1) + \dots + \dim(F_r) 
		&= \dim(F_0 + \dots + F_r) - \dim(F_0)\\
		&= n + k - c - r - \dim(F_0)\\
		&\leq n + k - c - r - (n-c)\\
		&= k -r. 
	\end{align*}
	However, on the other hand \autoref{lem:bdim}~\ref{itm:firstineq} implies $\dim(\hat{F}_i) \geq k$
	for all
	$i \in [r]$ (since we assumed $k \leq n$). So by \eqref{ineq:liftdim} 
	we have
	$\dim(F_i) \geq k - c$ which yields another inequality for the remaining
	summands:
	\begin{align*}
		\dim(F_1) + \dots + \dim(F_r) \geq r(k-c).
	\end{align*}
	These inequalities contradict each other since $r(k-c)>k-r$, which can 
	be seen
	by observing that $r$ is strictly positive and $c$ is strictly smaller 
	than $r$.
	
	Therefore $\dim(\hat{F}_j)<n$ for all $j \in [r]$.		 
	So we may apply part \ref{itm:mainineq} of \autoref{lem:bdim} and obtain
	$n-r+(r+1)k \leq \dim(\hat{F}_0) + \dots + \dim(\hat{F}_r)$. Hence,
	
	\begin{align*}
		n-r+(r+1)k \leq n + k + r(c-1),
	\end{align*}
	
	which is (since $r$ is strictly positive) equivalent to $k \leq c < r$,
	a contradiction.	
\end{proof}

\bibliographystyle{amsalpha}
\bibliography{MixedDefectivityBib}

\end{document}